\newtheorem{theorem}{Theorem}[section]
\newtheorem{lemma}[theorem]{Lemma}
\newtheorem{proposition}[theorem]{Proposition}
\newtheorem{corollary}[theorem]{Corollary}
\theoremstyle{definition}
\newtheorem{definition}[theorem]{Definition}
\newtheorem{remark}[theorem]{Remark}
\newtheoremstyle{TheoremRef}
        {8.0pt plus 2.0pt minus 4.0pt}
        {8.0pt plus 2.0pt minus 4.0pt}  
        {\itshape}                      
        {}                              
        {\bfseries}                     
        {.}                             
        {5pt}                             
        {\thmname{#1}\thmnote{ \bfseries #3}}
\theoremstyle{TheoremRef}
\numberwithin{equation}{section}
\newcommand{\CC}{\mathbb C}
\newcommand{\HH}{\mathbb H}
\newcommand{\cD}{\mathcal D}
\newcommand{\QQ}{\mathbb Q}
\newcommand{\RR}{\mathbb R}
\newcommand{\ZZ}{\mathbb Z}
\newcommand{\Orth}{\mathop{\null\mathrm {O}}\nolimits}
\newcommand{\latt}[1]{{\langle{#1}\rangle}}
\newenvironment{psmallmatrix}
  {\left(\begin{smallmatrix}}
{\end{smallmatrix}\right)}
\begin{document}

\title{Automorphic products that are singular modulo primes}

\author{Haowu Wang}

\address{School of Mathematics and Statistics, Wuhan University, Wuhan 430072, Hubei, People's Republic of China}

\email{haowu.wangmath@whu.edu.cn}

\author{Brandon Williams}

\address{Lehrstuhl A für Mathematik, RWTH Aachen, 52056 Aachen, Germany}

\email{brandon.williams@matha.rwth-aachen.de}

\date{\today}

\begin{abstract}
We use Rankin--Cohen brackets on $\mathrm{O}(n, 2)$ to prove that the Fourier coefficients of reflective Borcherds products often satisfy congruences modulo certain primes.
\end{abstract}

\maketitle

\section{Introduction}
This note is inspired by the paper \cite{KKN}, in which it was observed that most of the Fourier coefficients of the (suitably normalized) Siegel cusp form $\Phi_{35}$ of degree two and weight $35$ are divisible by the prime $p=23$. More precisely, if one writes 
$$
\Phi_{35}(Z) = \sum_{T > 0} a(T) e^{2\pi i \mathrm{Tr}(TZ)}, \quad Z \in \mathbb{H}_2,
$$ 
the sum extending over positive-definite half-integral $(2 \times 2)$-matrices $T$, then the main result of \cite{KKN} is that \begin{equation}\label{eq:cong}
a(T) \not \equiv 0 \, (\text{mod}\, 23) \; \Rightarrow \; \mathrm{det}(T) \equiv 0 \, (\text{mod}\, 23).
\end{equation}

This has already been generalized in several ways. In \cite{T}, similar congruences are derived for Siegel cusp forms of higher weights. The papers \cite{KN, NT} prove analogous results for Hermitian modular forms of degree two over the Gaussian and Eisenstein integers. The paper \cite{N} considers \emph{quaternionic} modular forms of degree two, while \cite{BKN, N2, NT2} consider Siegel modular forms of general degree.  We call modular forms satisfying congruences of type \eqref{eq:cong} \textit{singular modulo $p$}. 

In this note, we start with the fact that the cusp form $\Phi_{35}$ is a \emph{reflective Borcherds product} \cite{Bor98, Bor95, GN96, GN98}, which in this situation means that it vanishes only on Humbert surfaces in the  Siegel upper half-space that are fixed by transformations in the Siegel modular group. A natural generalization is to consider reflective Borcherds products on general orthogonal groups $\mathrm{O}(n, 2)$, with Siegel modular forms appearing through the exceptional isogeny from $\mathrm{Sp}_4(\mathbb{R})$ to $\mathrm{O}(3, 2)$. 

It turns out that reflective Borcherds products on $\Orth(n,2)$ \emph{with simple zeros} and of weight $k$ are very often singular modulo primes $p$ dividing $n/2-1-k$. In this note, we give a general argument to prove singularity modulo $p$ that takes a set of \emph{two or more} reflective Borcherds products and proves that some of them are singular modulo specific primes, using an identity based on the Rankin--Cohen brackets on $\mathrm{O}(n, 2)$. This argument requires almost no computation: the presence of congruences such as (\ref{eq:cong}) for $\Phi_{35}$ can be deduced from the location of its zeros. We also give similar arguments that can be used to prove that a single reflective product is singular modulo certain primes.

This note is organized as follows. In \S \ref{sec:background} we review reflective modular forms and define what it means for a modular form to be singular modulo a prime $p$.  In \S \ref{sec:construction} we introduce the Rankin--Cohen bracket on $\Orth(n,2)$ and explain how to use it to derive modular forms that are singular modulo primes. In the last two sections we work out over $50$ reflective Borcherds products that are singular modulo primes. In particular, for every prime $p<60$, we construct at least one mod $p$ singular modular form.  

\section{Reflective modular forms and singular modular forms modulo primes}\label{sec:background}

Let $L$ be an even integral lattice of signature $(n, 2)$ with $n \ge 3$, and let $L_{\mathbb{R}} = L \otimes \mathbb{R}$ and $L_{\mathbb{C}} = L \otimes \mathbb{C}$. The $\mathbb{Z}$-valued quadratic form on $L$ is denoted by $Q$ and the even bilinear form is $$\langle x, y \rangle = Q(x+y) - Q(x) - Q(y), \quad x, y \in L.$$

Attached to the orthogonal group $\mathrm{O}(L_{\mathbb{C}})$ is the Hermitian symmetric domain $\mathcal{D}$, the Grassmannian of oriented negative-definite planes in $L_{\mathbb{R}}$. This is naturally identified with one of the two connected components of $$\{[\mathcal{Z}] \in \mathbb{P}^1(L_{\mathbb{C}}): \; \langle \mathcal{Z}, \mathcal{Z} \rangle = 0, \; \langle \mathcal{Z}, \overline{\mathcal{Z}} \rangle < 0\}$$ by identifying $[\mathcal{X}+i\mathcal{Y}] \in \mathbb{P}^1(L_{\mathbb{C}})$ with the plane through $\mathcal{X}$ and $\mathcal{Y}$. We denote by $\Orth^+(L)$ the orthogonal subgroup that fixes both $\cD$ and $L$. 

Let $\Gamma \le \mathrm{O}^+(L)$ be a finite-index subgroup and $\chi : \Gamma \rightarrow \mathbb{C}^{\times}$ a character. A modular form of integral weight $k$, level $\Gamma$ and character $\chi$ is a holomorphic function $F$ on the cone over $\mathcal{D}$, $$\mathcal{A} = \{\mathcal{Z} \in L_{\mathbb{C}}: \; [\mathcal{Z}] \in \mathcal{D}\}$$ that satisfies the functional equations $$F(\gamma \mathcal{Z}) = \chi(\gamma) F(\mathcal{Z}) \quad \text{and} \quad F(t\mathcal{Z}) = t^{-k} F(\mathcal{Z})$$ for every $\gamma \in \Gamma$ and $t \in \mathbb{C}^{\times}$. 

\begin{definition}\label{def:reflective}
A non-constant modular form $F$ for $\Gamma\leq \Orth^+(L)$ is called \textit{reflective} if its zeros lie on hyperplanes
$$
r^\perp = \{ [\mathcal{Z}] \in \mathcal{D} : \latt{\mathcal{Z}, r} = 0 \}
$$
whose associated reflections 
$$
\sigma_r : L_\RR \to L_\RR, \quad  v \mapsto v - \frac{2\latt{v,r}}{\latt{r,r}}r
$$
lie in $\Gamma$.    
\end{definition} 

Reflective modular forms were introduced by Borcherds \cite{Bor98} and Gritsenko--Nikulin \cite{GN98} in 1998 and they have applications to generalized Kac--Moody algebras, hyperbolic reflection groups and birational geometry. The above definition is somewhat stronger than that of \cite{GN98}, where $F$ is called reflective if the reflections corresponding to zeros of $F$ lie in the larger group $\Orth^+(L)$. Bruinier's converse theorem \cite{Bru02} shows that, in many cases, all reflective modular forms can be constructed through the multiplicative Borcherds lift \cite{Bor95, Bor98}. In this case, the Fourier series of a reflective form has a natural infinite product expansion in which the exponents are the Fourier coefficients of a modular form (or Jacobi form) for $\mathrm{SL}_2$, and we refer to it as a reflective Borcherds product.

To define modular forms that are singular at a prime $p$ we have to work in the neighborhood of a fixed cusp. Suppose $c \in L$ is a primitive vector of norm $0$ and $c' \in L'$ is an element of the dual lattice with $\langle c, c' \rangle = 1.$ Let $L_{c, c'}$ be the orthogonal complement of $c$ and $c'$, i.e.
$$
L_{c, c'} = \{\lambda \in L: \; \langle \lambda, c \rangle = \langle \lambda, c' \rangle = 0\}.
$$ 
Attached to the pair $(c, c')$ we have the tube domain $$
\mathbb{H}_{c, c'} = \{Z = X+iY \in L_{c, c'} \otimes \mathbb{C}: \; \mathcal{Z} := c' + Z - Q(Z)c \in \mathcal{A}\},
$$ 
which is one of the two connected components of the set 
$$
\{Z = X+iY \in L_{c, c'} \otimes \mathbb{C}: \; \langle Y, Y \rangle < 0\}.
$$
On $\mathbb{H}_{c, c'}$, any modular form $F$ can be written as a Fourier series 
$$
F(Z) = \sum_{\substack{\lambda \in L_{c, c'} \otimes \mathbb{Q}}} a_F(\lambda) e^{2\pi i \langle \lambda, Z\rangle},
$$
in which the actual values of $\lambda$ range over a discrete group depending on $\Gamma$ and the character $\chi$. To be more precise: there exists a sublattice $K$ of $L_{c,c'}$ such that $\lambda$ lies in the dual $K'$ of $K$ whenever $a_F(\lambda)\neq 0$. By definition, the level of $K$ is the smallest positive integer $N$ such that $N\latt{v,v}\in 2\ZZ$ for all $v\in K'$. Clearly, 
$$
N\latt{\lambda,\lambda} \in 2\ZZ  
$$
for all $\lambda\in L_{c,c'}\otimes \QQ$ satisfying $a_F(\lambda)\neq 0$. Therefore, there is a smallest positive integer $D_F$ such that $D_F \cdot Q(\lambda)\in \ZZ$ for all $\lambda\in L_{c,c'}\otimes \QQ$ with $a_F(\lambda)\neq 0$. It is clear that $D_F|N$.

A non-constant modular form $F$ is called \emph{singular} (with respect to the pair $(c, c')$) if its Fourier series on $\mathbb{H}_{c, c'}$ is supported on vectors $\lambda$ of norm zero. By analogy, we define \textit{singular modular forms modulo a prime $p$} as follows:

\begin{definition}
Let $F$ be a non-constant modular form and $p$ be a prime not dividing $D_F$. The form $F$ is called \emph{singular modulo $p$} (at the cusp determined by $(c, c')$) if its Fourier coefficients are all integers and if $$a_F(\lambda) \equiv 0 \quad (\text{mod}\; p)$$ for all vectors $\lambda$ for which $Q(\lambda)$ is nonzero modulo $p$.
\end{definition}

\begin{remark}
Using the Fourier--Jacobi expansion, it is not difficult to show that a modular form is singular if and only if its weight is $k = n/2 - 1$. In this case, it is singular at every cusp. 

\vspace{2mm}

The notion of mod $p$ singular modular forms also appears to be independent of the choice of cusps, and the weight appears to satisfy the similar constraint $$k \equiv (n/2 - 1) \quad (\text{mod} \; p).$$ Unfortunately we do not have a proof of this. The converse is false: most modular forms of weight $k \equiv (n/2-1)$ mod $p$ fail to be singular modulo $p$.
\end{remark}

Singularity with respect to $(c, c')$ is closely related to the \emph{holomorphic Laplace operator}. If $e_1,...,e_n$ is any basis of $L_{c, c'}$ with Gram matrix $S$, and $z_1,...,z_n$ are the associated coordinates on $L_{c, c'} \otimes \mathbb{C}$, then define 
$$
\mathbf{\Delta} = \mathbf{\Delta}_{c, c'} := \frac{1}{8\pi^2} \sum_{i, j=1}^n s^{ij} \frac{\partial^2}{\partial z_i \partial z_j}
$$ 
where $s^{ij}$ are the entries of $S^{-1}$. Note that $\mathbf{\Delta}$ is independent of the basis $e_i$. 

Applying
$$
\mathbf{\Delta} \Big( e^{2\pi i \langle \lambda, Z \rangle} \Big) = -Q(\lambda) e^{2\pi i \langle \lambda, Z \rangle},
$$ 
to the Fourier series termwise shows that the form $F$ is annihilated by $\mathbf{\Delta}$ if and only if it is singular at $(c, c').$ 
Similarly, if $F$ has integral coefficients then $F$ is singular modulo $p$ if and only if 
$$
\mathbf{\Delta} (F) \equiv 0 \quad (\text{mod}\, p);
$$
here we recall that $D_F\cdot \mathbf{\Delta} (F)$ also has integral Fourier coefficients at the cusp $(c,c')$ and that $p$ does not divide $D_F$ by definition. 

The setting of \cite{KKN}, i.e. Siegel modular forms of degree two, corresponds to the case of the lattice $L = 2U \oplus A_1$ i.e.
$$
\mathbb{Z}^5 \; \text{with Gram matrix} \; \begin{psmallmatrix} 0 & 0 & 0 & 0 & 1 \\ 0 & 0 & 0 & 1 & 0 \\ 0 & 0 & 2 & 0 & 0 \\ 0 & 1 & 0 & 0 & 0 \\ 1 & 0 & 0 & 0 & 0 \end{psmallmatrix}.
$$
If we work with $c = (1, 0, 0, 0, 0)$ and $c' = (0, 0, 0, 0, 1)$ then vectors $(0, z_1, z_2, z_3, 0)$ of $\mathbb{H}_{c, c'}$ correspond exactly to matrices 
$\begin{psmallmatrix} 
z_1 & z_2 \\ z_2 & -z_3 
\end{psmallmatrix}$
in the Siegel upper half-space in a way that is compatible with the actions of $\mathrm{O}(3, 2)$ and $\mathrm{Sp}_4(\mathbb{R})$, and the Laplace operator at $(c, c')$ becomes (up to a scalar multiple) the \emph{theta-operator} $\frac{\partial^2}{\partial z_1 \partial z_3} - \frac{\partial^2}{\partial z_2^2}$. See also Section \ref{subsec:Siegel} below.

\section{The construction of singular automorphic products modulo primes}\label{sec:construction}

Let $L$ be an even lattice of signature $(n, 2)$ with $n \ge 3$ that contains a primitive vector $c$ of norm zero and a vector $c' \in L'$ with $\langle c, c'\rangle = 1$. The Laplace operator attached to the pair $(c, c')$ is simply denoted $\mathbf{\Delta}$. Let $\Gamma \leq \mathrm{O}^+(L)$ be a modular group.
Note that $\Gamma$ satisfies Koecher's principle: the Baily--Borel compactification of $\mathcal{D} / \Gamma$ contains no cusps in codimension one.

\begin{lemma} 
For modular forms $F$ of weight $k$ and $G$ of weight $\ell$ for $\Gamma$, the bracket \begin{align*} [F, G] :=& \Big( \frac{n}{2} - 1 - k \Big) \Big( \frac{n}{2} - 1 - \ell \Big) \mathbf{\Delta}(F G) \\ -& \Big( \frac{n}{2} - 1 - \ell \Big) \Big(\frac{n}{2} - 1 - k - \ell \Big) \mathbf{\Delta}(F) G \\ -& \Big( \frac{n}{2} - 1 - k \Big) \Big( \frac{n}{2} - 1 - k - \ell \Big) F \mathbf{\Delta}(G)\end{align*} is a cusp form of weight $k + \ell + 2$.
More generally, if $F$ has character $\chi_F$ and $G$ has character $\chi_G$, then the bracket $[F, G]$ has character $\chi_F \chi_G$.
\end{lemma}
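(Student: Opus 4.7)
The plan is to reduce the lemma to the explicit quasimodular transformation law of $\mathbf{\Delta}$ under $\Gamma$, after first simplifying the bracket via the Leibniz rule. Applying $\partial_i \partial_j (FG) = (\partial_i \partial_j F) G + \partial_i F \, \partial_j G + \partial_j F \, \partial_i G + F \, \partial_i \partial_j G$ and summing against $s^{ij}$ yields the polarization identity
$$
\mathbf{\Delta}(FG) = \mathbf{\Delta}(F)\, G + F\, \mathbf{\Delta}(G) + 2\, \mathcal{B}(F, G),
$$
where $\mathcal{B}(F, G) = \frac{1}{8\pi^2} \sum_{i,j} s^{ij} \partial_i F \, \partial_j G$. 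Substituting this into the definition of $[F, G]$ and using $\bigl(\frac{n}{2}-1-k\bigr) - \bigl(\frac{n}{2}-1-k-\ell\bigr) = \ell$ together with $\bigl(\frac{n}{2}-1-\ell\bigr) - \bigl(\frac{n}{2}-1-k-\ell\bigr) = k$, the bracket collapses to
$$
[F, G] = \ell\bigl(\tfrac{n}{2}-1-\ell\bigr)\, \mathbf{\Delta}(F)\, G + k\bigl(\tfrac{n}{2}-1-k\bigr)\, F\, \mathbf{\Delta}(G) + 2\bigl(\tfrac{n}{2}-1-k\bigr)\bigl(\tfrac{n}{2}-1-\ell\bigr)\, \mathcal{B}(F, G).
$$

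Next I would verify the transformation law of each summand. Homogeneity of degree $-(k+\ell+2)$ under the scaling $\mathcal{Z} \mapsto t\mathcal{Z}$ is immediate because both $\mathbf{\Delta}$ and $\mathcal{B}$ are second-order operators in tube coordinates, hence lower the scaling degree by $2$. For the action of $\gamma \in \Gamma$, the operator $\mathbf{\Delta}$ is only \emph{quasimodular}: there is an explicit first-order correction $C_\gamma$ such that for any modular form $H$ of weight $m$,
$$
\bigl(\mathbf{\Delta} H\bigr)\big|_{m+2}\gamma = \mathbf{\Delta}\bigl(H|_m\gamma\bigr) + m\bigl(\tfrac{n}{2}-1-m\bigr) \cdot C_\gamma(H),
$$
and a parallel polarized identity governs $\mathcal{B}(F, G)$. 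The weighting coefficients appearing in the bracket above are chosen precisely so that all of these correction terms cancel identically, as a polynomial identity in $k$, $\ell$, and $n$; the multiplicative characters are preserved automatically since every term is a bilinear differential expression in $F$ and $G$, so the result transforms with character $\chi_F \chi_G$. This is the technical heart of the proof: deriving the explicit cocycle $C_\gamma$ and its polarization requires careful bookkeeping of how the Gram-matrix coordinates on $L_{c, c'}$ transform under the non-parabolic part of $\mathrm{O}(n, 2)$. It is cleanest when phrased via the relation between $\mathbf{\Delta}$ and the quadratic Casimir of $\mathfrak{o}(n, 2)$, of which Rankin--Cohen-type brackets are a standard tensor-product projection.

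For the cusp-form property, the $\nu$-th Fourier coefficient of $[F, G]$ at the cusp $(c, c')$ is
$$
-\sum_{\lambda + \mu = \nu} a_F(\lambda)\, a_G(\mu) \Bigl[\, \ell\bigl(\tfrac{n}{2}-1-\ell\bigr) Q(\lambda) + k\bigl(\tfrac{n}{2}-1-k\bigr) Q(\mu) + \bigl(\tfrac{n}{2}-1-k\bigr)\bigl(\tfrac{n}{2}-1-\ell\bigr) \langle \lambda, \mu \rangle\, \Bigr].
$$
At $\nu = 0$, Koecher's principle places both $\lambda$ and $\mu$ in a common closed positive cone, so $\lambda + \mu = 0$ forces $\lambda = \mu = 0$; then $Q(\lambda), Q(\mu), \langle \lambda, \mu \rangle$ all vanish and the constant term is zero. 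The same argument applies at every zero-dimensional cusp, which by Koecher's principle is enough to conclude that $[F, G]$ is a cusp form of weight $k + \ell + 2$.
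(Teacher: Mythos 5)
Your polarization reduction of the bracket and your Fourier-coefficient formula (with $\mathcal{B}$ contributing $-\tfrac12\langle\lambda,\mu\rangle$ per term) are both correct, and your route is the ``direct proof'' that the paper only alludes to --- the paper itself proves the lemma by identifying $[F,G]$ with the first Rankin--Cohen bracket of Choie--Kim \cite{CK} and citing \cite[Lemma 2.4]{Wil21} for the equivariance $[F,G]|\gamma=[F|\gamma,G|\gamma]$. But the step you black-box is the entire content of the lemma, and the form in which you assert it is not right. Conjugating the second-order operator $\mathbf{\Delta}$ by the automorphy factor $j(\gamma,Z)^{-m}$ and the coordinate change produces \emph{two} species of error term: a genuinely first-order one, of the shape $-2m\,j^{-m-1}\mathcal{B}(j,\cdot)$ with coefficient \emph{linear} in $m$, and a zeroth-order one with coefficient \emph{quadratic} in $m$. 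A single correction $m(\tfrac n2-1-m)\,C_\gamma(H)$ with one weight-independent operator $C_\gamma$ cannot encode both; and if $C_\gamma$ has a nonzero derivation part (as your phrase ``first-order correction'' suggests), the cancellation you invoke fails. Indeed, writing $A=\tfrac n2-1-k$, $B=\tfrac n2-1-\ell$, $C=\tfrac n2-1-k-\ell$, your asserted law gives $[F,G]|\gamma-[F,G]=ABC\bigl((k+\ell)C_\gamma(FG)-k\,C_\gamma(F)G-\ell\,F C_\gamma(G)\bigr)$, which vanishes when $C_\gamma$ is multiplication by a function but equals $ABC\bigl(\ell\,C_\gamma(F)G+k\,FC_\gamma(G)\bigr)\neq 0$ when $C_\gamma$ is a derivation. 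The true argument needs the first-order and zeroth-order errors to cancel \emph{separately}, using the Leibniz structure for the former and the quadratic coefficient identity for the latter; this has to be computed (or cited from \cite{CK} or \cite{Wil21}), not asserted as ``a polynomial identity in $k,\ell,n$''.

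The cuspidality argument is also incomplete. Vanishing of the constant term at every zero-dimensional cusp does not make an orthogonal modular form cuspidal --- Klingen--Eisenstein series vanish at all zero-dimensional cusps without being cusp forms --- and Koecher's principle only gives holomorphy at the boundary, never cuspidality. You must also kill the Siegel operators at the one-dimensional cusps, i.e.\ show $a_{[F,G]}(\nu)=0$ for every \emph{isotropic} $\nu\neq 0$ in the closed positive cone, not just for $\nu=0$. Your own formula does deliver this: if $\lambda+\mu=\nu$ with $Q(\nu)=0$ and $\lambda,\mu$ in the closed cone, then $Q(\lambda)+Q(\mu)+\langle\lambda,\mu\rangle=0$ with all three summands nonnegative (reverse Cauchy--Schwarz in the Lorentzian lattice $L_{c,c'}$), so $Q(\lambda)=Q(\mu)=\langle\lambda,\mu\rangle=0$ and the term dies. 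So this gap is repairable by the same computation, but as written the conclusion does not follow. (A minor further point: the weight $k+\ell+2$ is not a ``scaling degree'' statement in tube coordinates, since $\mathcal{Z}\mapsto t\mathcal{Z}$ does not act by $Z\mapsto tZ$ on $\mathbb{H}_{c,c'}$; it is part of the same automorphy-factor computation discussed above.)
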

\begin{proof} Up to a scalar multiple, this is the first \emph{Rankin--Cohen bracket} of $F$ and $G$ as defined by Choie and Kim \cite{CK}. The assumption of \cite{CK} that the lattice $L$ splits two hyperbolic planes is unnecessary. This lemma can also be proved directly by analyzing how $\mathbf{\Delta}(F)$ transforms under the modular group. In particular, it follows from \cite[Lemma 2.4]{Wil21} that 
\begin{equation}\label{eq:transformation}
[F, G]|\gamma = [F|\gamma, G|\gamma]    
\end{equation}
for any $\gamma \in \Orth^+(L_\RR)$. 
\end{proof}

Since $F$ is singular modulo $p$ if and only if all Fourier coefficients of $\mathbf{\Delta}(F)$ vanish modulo $p$, we obtain the corollary:

\begin{corollary}\label{cor}
Let $p$ be a prime that divides the numerator of $\frac{n}{2} - 1 - k$. Suppose $G$ is a modular form of weight $\ell$ that is not identically zero modulo $p$. Suppose $p$ does not divide $\ell$ and that $p$ does not divide the numerator of $\frac{n}{2}-1-\ell$. The following are equivalent: 
\begin{enumerate}
    \item $F$ is singular modulo $p$;
    \item The cusp form $[F, G]$ vanishes identically modulo $p$.
\end{enumerate}
\end{corollary}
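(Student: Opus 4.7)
The plan is to substitute the hypothesis $p\mid (n/2-1-k)$ directly into the definition of $[F,G]$ and track what survives. Writing $\alpha = n/2-1-k$, $\beta = n/2-1-\ell$ and $\gamma = n/2-1-k-\ell = \alpha-\ell$, the bracket reads
$$
[F,G] = \alpha\beta\,\mathbf{\Delta}(FG) \;-\; \beta\gamma\,\mathbf{\Delta}(F)G \;-\; \alpha\gamma\,F\mathbf{\Delta}(G).
$$
Both the first and the third summand carry a factor of $\alpha$ and hence drop out modulo $p$, while in the middle term $\gamma \equiv -\ell\pmod p$. So the first step reduces to the simple congruence
$$
[F,G] \equiv \beta\ell \cdot \mathbf{\Delta}(F)\,G \pmod p.
$$

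I would then invoke the remaining hypotheses: since neither $\ell$ nor the numerator of $\beta$ is divisible by $p$, the scalar $\beta\ell$ is a unit modulo $p$, and the congruence above shows that $[F,G]$ vanishes modulo $p$ if and only if $\mathbf{\Delta}(F)\,G$ does. To turn this into an equivalence involving $\mathbf{\Delta}(F)$ alone, one observes that Fourier expansions at the cusp $(c,c')$ lie in a ring of formal series indexed by a torsion-free abelian monoid; reduced modulo $p$ this is an integral domain. Since $G\not\equiv 0\pmod p$ by assumption, cancellation yields $\mathbf{\Delta}(F)\equiv 0\pmod p$, and by the discussion preceding the lemma this is equivalent to $F$ being singular modulo $p$.

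The main point requiring care is not the algebra but the bookkeeping of denominators: one must verify that reduction modulo $p$ is unambiguous throughout. The coefficients of $\mathbf{\Delta}(F)$ and $\mathbf{\Delta}(G)$ have denominators dividing $D_F$ and $D_G$, which are coprime to $p$ by hypothesis; the possibly half-integral values of $\alpha,\beta,\gamma$ contribute at most a factor of $2$, which is problematic only at $p=2$, and this is compatible with $p\mid\mathrm{num}(\alpha)$ only when $\alpha$ is an even integer, so no half-integers intervene. Once these technicalities are cleared away, the argument is essentially the one-line calculation above.
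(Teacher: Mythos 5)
Your argument is correct and follows essentially the same route as the paper, which states the corollary as an immediate consequence of the lemma: modulo $p$ only the middle term of the bracket survives, giving $[F,G]\equiv \beta\ell\,\mathbf{\Delta}(F)G \pmod p$ with $\beta\ell$ a $p$-adic unit, and $F$ is singular mod $p$ precisely when $\mathbf{\Delta}(F)\equiv 0$. Your additional bookkeeping (the integral-domain property of the Fourier expansion ring mod $p$ and the control of denominators coming from $D_F$, $D_G$ and the possible factor of $2$) merely makes explicit what the paper leaves implicit.
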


Now suppose that $F$ is a reflective modular form for $\Gamma \leq \Orth^+(L)$ with only simple zeros, and that $G$ is a modular form for $\Gamma$ that is non-vanishing on every zero $r^{\perp}$ of $F$. Since the associated reflection $\sigma_r$ is an involution and is contained in $\Gamma$, it follows that 
$$
F(\sigma_r \mathcal{Z}) = -F(\mathcal{Z}) \quad \text{and} \quad G(\sigma_r \mathcal{Z}) = G(\mathcal{Z}).
$$ 

Fix a $\mathbb{R}$-basis $e_1,...,e_n$ of the lattice $L_{c,c'}$ for which 
$$
\latt{e_1,e_1}=-1, \quad \latt{e_2, e_2} = \cdots = \latt{e_{n}, e_{n}} = 1.
$$ 
We normalize the vector $r$ such that $\latt{r,r}=1$. 
There exists $\gamma\in \Orth^+(L_\RR)$ such that $\gamma(e_n)=v$, and this $\gamma$ maps the hyperplane $v^{\perp}$ biholomorphically onto $e_n^{\perp}$. 

View $F$ and $G$ as holomorphic functions on $\HH_{c,c'}$. Since $\gamma\sigma_{e_n}\gamma^{-1}=\sigma_{\gamma(e_n)}=\sigma_r$, we have
$$
(F|\gamma)|\sigma_{e_n}=-F|\gamma \quad \text{and}  \quad (G|\gamma)|\sigma_{e_n}=G|\gamma. 
$$

Write the variable $Z\in \HH_{c,c'}$ in the form $z' + z_n e_n$, where $z'\in e_n^\perp $.
Since $\sigma_{e_n}$ fixes $z'$ and maps $z_n$ into $-z_n$, the Taylor series developments of $F|\gamma$ and $G|\gamma$ about $e_n^{\perp}$ have the form 
$$
F|\gamma(Z) = \sum_{m=0}^{\infty} f_{2m+1}(z') z_n^{2m+1} \quad \text{and} \quad G|\gamma(Z) = \sum_{m=0}^{\infty} g_{2m}(z') z_n^{2m},
$$ 
respectively. Applying $\mathbf{\Delta}$, we find that all of $\mathbf{\Delta}(F|\gamma \cdot G|\gamma)$, $\mathbf{\Delta}(F|\gamma)\cdot G|\gamma$ and $F|\gamma \cdot \mathbf{\Delta}(G|\gamma)$ also vanish on the divisor $e_n^{\perp}=\{z_n=0\}$. Using Equation \eqref{eq:transformation}, we find that $[F,G]=[F|\gamma, G|\gamma]|\gamma^{-1}$ vanishes along $r^\perp$. The quotient $\frac{[F, G]}{F}$ is therefore a holomorphic modular form of weight $\ell+2$. 

\vspace{2mm}

If $G$ \emph{also} happens to be a reflective modular form for $\Gamma$, with only simple zeros that are distinct from those of $F$, then the above argument shows that $[F, G]$ is divisible by both $F$ and $G$ and therefore the quotient $\frac{[F, G]}{FG}$ is a holomorphic modular form of weight two without character. 

\vspace{2mm}

Many groups $\Gamma$ do not admit holomorphic modular forms of weight two. (For example, this is always true if $n > 6$, and it is usually true for $\Gamma = \mathrm{O}^+(L)$ if the discriminant of $L$ is reasonably small.) In these cases, we obtain $[F, G] = 0$ and therefore an integral relation among $\mathbf{\Delta}(FG)$, $\mathbf{\Delta}(F)G$ and $F \mathbf{\Delta}(G)$. This is summarized below:

\begin{proposition}\label{prop:main}
Let $L$ be an even lattice of signature $(n,2)$ with $n\geq 3$. 
Suppose $F$ and $G$ are reflective modular forms for $\Gamma\leq \Orth^+(L)$ of weights $k$ and $\ell$ with simple and disjoint zeros, and that $\Gamma$ admits no modular forms of weight two with trivial character. Then we have the identity 
\begin{align*}
\Big( \frac{n}{2} - 1 - k \Big) \Big( \frac{n}{2} - 1 - \ell \Big) \mathbf{\Delta}(F G) 
=& \Big( \frac{n}{2} - 1 - \ell \Big) \Big(\frac{n}{2} - 1 - k - \ell \Big) \mathbf{\Delta}(F) G \\
+& \Big( \frac{n}{2} - 1 - k \Big) \Big( \frac{n}{2} - 1 - k - \ell \Big) F \mathbf{\Delta}(G).    
\end{align*}
In particular,
\begin{enumerate}
\item $F$ is singular modulo every prime dividing $\frac{n}{2} - 1 - k$ but neither $\ell$ nor $\frac{n}{2}-1-\ell$;
\item $G$ is singular modulo every prime dividing $\frac{n}{2} - 1 - \ell$ but neither $k$ nor  $\frac{n}{2}-1-k$;
\item $FG$ is singular modulo every prime dividing $\frac{n}{2} - 1 - (k+\ell)$ but neither $k$ nor $\ell$.
\end{enumerate}
\end{proposition}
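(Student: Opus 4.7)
The plan is to recognize the displayed identity as the vanishing $[F,G] \equiv 0$ of the Rankin--Cohen bracket introduced in the preceding lemma; once this is in hand, the three singularity claims follow by arithmetic reductions modulo the appropriate primes.

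First I would deduce $[F,G] \equiv 0$ from the discussion immediately preceding the proposition statement. That discussion shows that for any zero hyperplane $r^\perp$ of $F$, after conjugating by a suitable $\gamma \in \Orth^+(L_\RR)$ so that $r^\perp$ becomes $e_n^\perp$, the Taylor expansion of $F|\gamma$ in $z_n$ contains only odd powers while that of $G|\gamma$ contains only even powers, so that each of $\mathbf{\Delta}(F|\gamma\cdot G|\gamma)$, $\mathbf{\Delta}(F|\gamma)\cdot G|\gamma$, and $F|\gamma\cdot \mathbf{\Delta}(G|\gamma)$ vanishes on $\{z_n=0\}$. Using \eqref{eq:transformation}, $[F,G]$ vanishes along every zero hyperplane of $F$, and by symmetry along every zero hyperplane of $G$. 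Because $F$ and $G$ have simple and disjoint zeros, the quotient $[F,G]/(FG)$ is then holomorphic on all of $\cA$. It transforms as a modular form of weight $(k+\ell+2)-k-\ell=2$ with character $\chi_F\chi_G \cdot (\chi_F\chi_G)^{-1}=1$, so by Koecher's principle it is a holomorphic modular form of weight two with trivial character for $\Gamma$. By hypothesis no such form exists, so $[F,G]\equiv 0$, and expanding the definition of the bracket yields the identity.

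For part (1), let $p$ be a prime dividing $n/2-1-k$ but neither $\ell$ nor $n/2-1-\ell$. Reducing the identity modulo $p$ annihilates the left-hand side and the second summand on the right, leaving
$$
\bigl(\tfrac{n}{2}-1-\ell\bigr)\bigl(\tfrac{n}{2}-1-k-\ell\bigr)\,\mathbf{\Delta}(F)\,G \equiv 0 \pmod{p}.
$$
From $n/2-1-k \equiv 0 \pmod p$ and $p\nmid \ell$ we get $n/2-1-k-\ell \equiv -\ell \not\equiv 0 \pmod p$, while $p \nmid n/2-1-\ell$ by hypothesis. Since $G$ is not identically zero modulo $p$, this forces $\mathbf{\Delta}(F)\equiv 0 \pmod p$, so $F$ is singular modulo $p$. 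Part (2) follows by swapping the roles of $F$ and $G$. For part (3), if $p$ divides $n/2-1-(k+\ell)$ but neither $k$ nor $\ell$, both summands on the right are divisible by $p$, so $(n/2-1-k)(n/2-1-\ell)\,\mathbf{\Delta}(FG) \equiv 0 \pmod p$; the congruences $n/2-1-k \equiv \ell$ and $n/2-1-\ell \equiv k$ modulo $p$ then give $k\ell\,\mathbf{\Delta}(FG) \equiv 0 \pmod p$, i.e., $\mathbf{\Delta}(FG)\equiv 0 \pmod p$.

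The main conceptual step is the first paragraph: one must carefully assemble the pre-proposition analysis to obtain vanishing of $[F,G]$ along each reflection hyperplane, verify that the quotient $[F,G]/(FG)$ extends to a global holomorphic modular form via Koecher's principle, and then invoke the weight-two hypothesis to conclude. Once the identity is established, parts (1)--(3) reduce to elementary manipulations of residues modulo $p$.
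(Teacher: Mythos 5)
Your proposal is correct and follows essentially the same route as the paper: the identity is obtained by showing $[F,G]$ vanishes along every reflection hyperplane in the divisor of $FG$, so that $[F,G]/(FG)$ is a holomorphic weight-two form with trivial character and hence zero, after which the congruences follow by reducing the identity modulo $p$ exactly as you describe. The only step you make explicit that the paper leaves implicit is the passage from $\mathbf{\Delta}(F)\,G \equiv 0$ to $\mathbf{\Delta}(F) \equiv 0 \pmod p$, which indeed requires $G \not\equiv 0 \pmod p$ and the integrality of the Fourier coefficients; this is harmless here since the relevant forms are Borcherds products with leading coefficient $1$.
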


More generally, under these assumptions, $F$ is singular modulo any prime $p$ that divides $\frac{n}{2} - 1 - k$ to a greater power than any of $\frac{n}{2}-1-\ell$ and $\frac{n}{2}-1-k-\ell$, and similarly for $G$ and $FG$. 

\begin{remark}
The bracket $[-,-]$ can be generalized to any number of modular forms. Let  $F_1,...,F_N$ be modular forms for $\Gamma\leq \Orth^+(L)$ of weights $k_1,...,k_N$. Then
\begin{align*} 
[F_1,...,F_N] &:= \prod_{i=1}^N\Big(\frac{n}{2}-1-k_i \Big) \times \mathbf{\Delta}\left(\prod_{i=1}^N F_i \right) \\ & \quad - \Big( \frac{n}{2} - 1 - \sum_{i=1}^{N} k_i \Big) \times \sum_{j=1}^N F_j \cdot \mathbf{\Delta}\left( \prod_{i \ne j} \Big(\frac{n}{2} - 1 - k_i \Big) F_i \right) \end{align*} 
defines a cusp form of weight $2 + \sum_{i=1}^N k_i$ for $\Gamma$. This is also a special case of the Rankin--Cohen brackets defined in \cite{CK}. The identity in Proposition \ref{prop:main} generalizes to an identity involving any number of reflective products; however, this does not appear to give any information not already obtained from considering the products in pairs.
\end{remark}

It was proved in \cite{WW23b} that every holomorphic Borcherds product of singular weight on $L$ can be viewed as a reflective modular form, possibly after passing to a distinct lattice in $L\otimes \QQ$. It is amusing that the notion of reflective modular forms plays a similar role for congruences.

\section{Examples}
In this section we use Proposition \ref{prop:main} to produce a number of examples of reflective Borcherds products on orthogonal groups of root lattices or related lattices that are singular modulo certain primes. The non-existence of modular forms of weight two in the nontrivial case of $n\leq 6$ can be derived from \cite{WW20a, WW21a, WW23a}, where the entire graded rings of modular forms were determined. 

We denote by $U$ the hyperbolic plane, i.e. the lattice $\ZZ^2$ with Gram matrix $\begin{psmallmatrix} 0 & 1  \\ 1 & 0 \end{psmallmatrix}$. Let $A_n$, $D_n$, $E_6$, $E_7$ and $E_8$ be the usual root lattices. For a lattice $L$ and $d \in \mathbb{N}$, we write $L(d)$ to mean $L$ with its quadratic form multiplied by the factor $d$.

\subsection{Siegel modular forms of degree two}\label{subsec:Siegel}
When $L$ is the lattice $2U \oplus A_1$ with $n = 3$, modular forms for $\mathrm{O}^+(L)$ are the same as Siegel modular forms of degree two and even weight for the level one modular group $\mathrm{Sp}_4(\mathbb{Z}).$ Through this identification, rational quadratic divisors become the classical Humbert surfaces defined by singular relations. There are two equivalence classes of reflective divisors: 
\begin{itemize}
\item[(i)] The Humbert surface of invariant one, which is represented by the set of diagonal matrices $\begin{psmallmatrix} \tau & 0 \\ 0 & w \end{psmallmatrix}$ in $\mathbb{H}_2$; 
\item[(ii)] The Humbert surface of invariant four, which is represented by the set of matrices $\begin{psmallmatrix} \tau & z \\ z & \tau \end{psmallmatrix}$ with equal diagonal entries. 
\end{itemize}
Both reflective Humbert surfaces occur as the zero locus of a Borcherds product for $\Orth^+(L)$:
\begin{itemize}
\item[(a)] The form $\Psi_5$ of weight $k = 5$, a square root of the Igusa cusp form of weight $10$, vanishes with simple zeros on the Humbert surface of invariant one;
\item[(b)] The quotient $\Phi_{30} = \Phi_{35} / \Psi_5$ of weight $\ell = 30$, where $\Phi_{35}$ is the cusp form of weight $35$, vanishes with simple zeros on the Humbert surface of invariant four.
\end{itemize}

We calculate 
\begin{align*} \frac{n}{2} - 1 - k &= -\frac{9}{2}; \\ 
\frac{n}{2}-1-\ell &= -\frac{59}{2}; \\ 
\frac{n}{2} - 1 - k - \ell &= -\frac{69}{2} = -\frac{3 \cdot 23}{2}.
\end{align*} 
Proposition \ref{prop:main} and the non-existence of Siegel modular forms of weight two yields: 
\begin{enumerate}
\item $\Psi_5$ is singular modulo $p=3$;
\item $\Phi_{30}$ is singular modulo $p=59$;
\item $\Phi_{35}=\Psi_5\Phi_{30}$ is singular modulo $p=23$.
\end{enumerate}

\subsection{Siegel paramodular forms of degree two and level 2 and 3} 
Section \ref{subsec:Siegel} gives the simplest example of a number of realizations of arithmetic subgroups of $\mathrm{Sp}_4(\mathbb{Q})$ as orthogonal groups of lattices. When $L = 2U \oplus A_1(t)$, modular forms for $\mathrm{O}^+(L)$ are the same as Siegel paramodular forms of degree two and level $t$ that are invariant under certain additional involutions. We will work out the congruences implied by Proposition \ref{prop:main} when $t=2$ or $t=3$. 

\subsubsection{Level $2$}
When $t = 2$, there are three equivalence classes of reflective divisors associated to three reflective Borcherds products for $\Orth^+(L)$: 
\begin{itemize}
\item[(i)] $\Psi_2$, a weight two cusp form with a character $\chi$ of order four that vanishes precisely on hyperplanes $r^{\perp}$ with $r\in L'$ and $Q(r) = 1/8$;
\item[(ii)] $\Psi_9$, a weight nine cusp form with character $\chi^3$ that vanishes precisely on primitive hyperplanes $r^{\perp}$ with $r\in L'$ and $Q(r) = 1/2$;
\item[(iii)] $\Phi_{12}$, a weight twelve non-cusp form that vanishes precisely on hyperplanes $r^{\perp}$ with $r\in L$ and $Q(r) = 1$.
\end{itemize}
There are no (nonzero) modular forms of weight 2 with trivial character for $\Orth^+(2U\oplus A_1(2))$. By applying Proposition \ref{prop:main} to all pairs that can be formed from $\{\Psi_2, \Psi_9, \Phi_{12}\}$, we find:
\begin{enumerate}
\item $\Psi_9$ is singular modulo $p=17$; 
\item $\Phi_{12}$ is singular modulo $p=23$; 
\item $\Psi_2 \Psi_9$ is singular modulo $p=7$; 
\item $\Psi_2\Phi_{12}$ is singular modulo $p=3$; 
\item $\Psi_9 \Phi_{12}$ is singular modulo $p=41$; 
\item $\Psi_2 \Psi_9 \Phi_{12}$ is singular modulo $p=5$. 
\end{enumerate}

Proposition \ref{prop:main} does not imply any congruence for $\Psi_2$ itself, and judging by its first Fourier coefficients, $\Psi_2$ does not appear to be singular modulo any prime.

\subsubsection{Level $3$} In level $t=3$, there are also three equivalence classes of reflective divisors associated to three reflective Borcherds products for $\Orth^+(L)$: 
\begin{itemize}
\item[(i)] $\Psi_1$, a weight one cusp form with a character $\chi$ of order six that vanishes precisely on hyperplanes $r^{\perp}$ with $r\in L'$ and $Q(r) = 1/12$;
\item[(ii)] $\Psi_6$, a weight six cusp form with character $\chi^3$ that vanishes precisely on primitive hyperplanes $r^{\perp}$ with $r\in L'$ and $Q(r) = 1/3$;
\item[(iii)] $\Phi_{12}$, a weight twelve non-cusp form that vanishes precisely on hyperplanes $r^{\perp}$ with $r\in L$ and $Q(r) = 1$.
\end{itemize}
By applying Proposition \ref{prop:main} to the three pairs of reflective Borcherds products we obtain:
\begin{enumerate}
\item $\Psi_6$ is singular modulo $p=11$; 
\item $\Phi_{12}$ is singular modulo $p=23$; 
\item $\Psi_1 \Psi_6$ is singular modulo $p = 13$; 
\item $\Psi_1 \Phi_{12}$ is singular modulo $p = 5$; 
\item $\Psi_6 \Phi_{12}$ is singular modulo both $p=5$ and $p=7$; 
\item $\Psi_1 \Psi_6 \Phi_{12}$ is singular modulo $p=37$.
\end{enumerate}

Similarly to the level 2 case, we are unable to obtain any congruence for $\Psi_1$ itself; and indeed $\Psi_1$ does not appear to be singular modulo any prime.

\subsection{Hermitian modular forms of degree two over the Eisenstein integers}
Modular forms for the orthogonal group of $L = 2U \oplus A_2$ are essentially the same as Hermitian modular forms of degree two for the full modular group over the Eisenstein integers. There are two classes of reflective divisors associated to two Borcherds products for $\Orth^+(L)$: 
\begin{itemize}
\item[(i)] $\Psi_9$, a cusp form of weight $9$ with a quadratic character that vanishes precisely on hyperplanes $r^{\perp}$ with $r\in L'$ and $Q(r) = 1/3$;
\item[(ii)] $\Phi_{45}$, a cusp form of weight $45$ with a quadratic character that vanishes precisely on hyperplanes $r^{\perp}$ with $r\in L$ and $Q(r) = 1$.
\end{itemize}
We derive from Proposition \ref{prop:main} the following
\begin{enumerate}
\item $\Psi_9$ is singular modulo $p = 2$ (see the paragraph after Proposition \ref{prop:main});
\item $\Phi_{45}$ is singular modulo $p=11$;
\item $\Psi_9 \Phi_{45}$ is singular modulo $p=53$.
\end{enumerate}

The congruences (1) and (2) were proved in \cite{NT} using an argument based on a Sturm bound for Hermitian modular forms.

\subsection{Modular forms on \texorpdfstring{$2U(2)\oplus A_2$}{}}
Let $L=2U(2)\oplus A_2$. The orthogonal group $\Orth^+(L)$ can be realized as a level two subgroup of the Hermitian modular group of degree two over the Eisenstein integers (cf. \cite{HK} for a precise statement). There are three classes of reflective hyperplanes, each occurring as the divisor of a Borcherds product for $\Orth^+(L)$ with simple zeros: 
\begin{itemize}
\item[(i)] $\Psi_{3}$, a modular form of weight $3$ that vanishes precisely on hyperplanes $r^\perp$ with $r\in L'$, $Q(r)=1/3$ and $3r\in L$;
\item[(ii)] $\Psi_{12}$, a modular form of weight $12$ that vanishes precisely on hyperplanes $r^\perp$ with $r\in L'$, $Q(r)=1/2$ and $2r\in L$;
\item[(iii)] $\Phi_{15}$, a modular form of weight $15$ that vanishes precisely on hyperplanes $r^\perp$ with $r\in L$, $Q(r)=1$ and $r/2\not\in L'$.
\end{itemize}
By Proposition \ref{prop:main}, we find:
\begin{enumerate}
\item $\Psi_{12}$ is singular modulo $p=11$;
\item $\Phi_{15}$ is singular modulo $p=7$;
\item $\Psi_3\Psi_{12}$ is singular modulo $p=7$;
\item $\Psi_3\Phi_{15}$ is singular modulo $p=17$;
\item $\Psi_{12}\Phi_{15}$ is singular modulo $p=13$;
\item $\Psi_3\Psi_{12}\Phi_{15}$ is singular modulo $p=29$.
\end{enumerate}

\subsection{Hermitian modular forms of degree two over the Gaussian integers}
Modular forms for the orthogonal group of $L=2U\oplus 2A_1$ are essentially the same as Hermitian modular forms of degree two for the full modular group over the Gaussian integers. There are three classes of reflective divisors associated to three products for $\Orth^+(L)$: 
\begin{itemize}
\item[(i)] $\Psi_4$, a cusp form of weight four with a quadratic character that vanishes precisely on hyperplanes $r^{\perp}$ with $r\in L'$ and $Q(r) = 1/4$;
\item[(ii)] $\Psi_{10}$, a cusp form of weight ten with a quadratic character that vanishes precisely on hyperplanes $r^{\perp}$ with $r\in L'$ and $Q(r) = 1/2$;
\item[(iii)] $\Phi_{30}$, a skew-symmetric cusp form of weight 30 with a quadratic character that vanishes precisely on hyperplanes $r^{\perp}$ with $r\in L$, $Q(r) = 1$ and $r/2 \not\in L'$.
\end{itemize}
Note that $\Psi_4 \Psi_{10} \Phi_{30}$ has the determinant character on $\Orth^+(L)$. 
By applying Proposition \ref{prop:main}, 
\begin{enumerate}
\item $\Psi_{10}$ is singular modulo $p=3$; 
\item $\Phi_{30}$ is singular modulo $p=29$; 
\item $\Psi_4 \Psi_{10}$ is singular modulo $p=13$; 
\item $\Psi_4 \Phi_{30}$ is singular modulo $p=11$; 
\item $\Psi_{10} \Phi_{30}$ is singular modulo $p=13$; 
\item $\Psi_4 \Psi_{10} \Phi_{30}$ is singular modulo $p=43$. 
\end{enumerate}

\subsection{Modular forms on \texorpdfstring{$2U\oplus A_3$}{}}
Let $L=2U\oplus A_3$. There are two classes of reflective hyperplanes associated to two reflective Borcherds products for $\Orth^+(L)$ with simple zeros: 
\begin{itemize}
\item[(i)] $\Psi_9$, a cusp form of weight $9$ with a quadratic character that vanishes precisely on hyperplanes $r^\perp$ with $r\in L'$ and $Q(r)=1/2$;  
\item[(ii)] $\Phi_{54}$, a cusp form of weight $54$ with a quadratic  character that vanishes precisely on hyperplanes $r^\perp$ with $r\in L$ and $Q(r)=1$. 
\end{itemize}
Proposition \ref{prop:main} then yields:
\begin{enumerate}
\item $\Phi_{54}$ is singular modulo $p=7$;
\item $\Psi_9\Phi_{54}$ is singular modulo $p=41$.
\end{enumerate}

\subsection{Modular forms on \texorpdfstring{$2U\oplus D_4$}{}}
Let $L = 2U \oplus D_4$. Modular forms for the orthogonal group of $L$ are essentially the same as modular forms for the quaternionic modular group attached to the order of Hurwitz integers as defined in \cite{K}. There are two orbits of reflective hyperplanes which belong to two reflective Borcherds products on $\Orth^+(L)$ with simple zeros: 
\begin{itemize}
\item[(i)] $\Psi_{24}$, a cusp form of weight $24$ with a quadratic character that vanishes precisely on hyperplanes $r^\perp$ with $r\in L'$ and $Q(r)=1/2$;
\item[(ii)] $\Phi_{72}$, a cusp form of weight $72$ with a quadratic character that vanishes precisely on hyperplanes $r^\perp$ with $r\in L$ and $Q(r)=1$.
\end{itemize}
By Proposition \ref{prop:main},
\begin{enumerate}
\item $\Phi_{72}$ is singular modulo both $p=5$ and $p=7$;
\item $\Psi_{24}\Phi_{72}$ is singular modulo $p=47$.
\end{enumerate}

\vspace{2mm}

Fix the model $$D_4=\{ (x_1,x_2,x_3,x_4) \in \ZZ^4 : x_1+x_2+x_3+x_4 \in 2\ZZ \}.$$ There exists a Borcherds product of weight $8$ that vanishes precisely on hyperplanes $r^\perp$ with $r \in (1,0,0,0) + D_4$ and $Q(r)=1/2$, and we denote it by $\Psi_8$. Let $\Gamma$ be the subgroup of $\Orth^+(L)$ generated by the reflections through hyperplanes in the divisor of $\Psi_8\Phi_{72}$. Then $\Psi_8$ and $\Phi_{72}$ are modular under $\Gamma$ and are therefore reflective modular forms for $\Gamma$. By applying Proposition \ref{prop:main}, we find:
\begin{itemize}
\item $\Psi_{8}\Phi_{72}$ is singular modulo $p=13$.
\end{itemize}

There are also products $\Psi_8^{(1)}$ and $\Psi_8^{(2)}$ that vanish exactly on the hyperplanes $r^\perp$ with $Q(r) = 1$ and $r\in (1/2,1/2,1/2,1/2)+L$ or $r\in (-1/2,1/2,1/2,1/2)+L$, respectively, and $\Psi_{24}$ factors as $$\Psi_{24}=\Psi_8\Psi_8^{(1)}\Psi_8^{(2)}.$$ However, Proposition \ref{prop:main} does not apply to these factors because neither $\Psi_8^{(1)}$ nor $\Psi_8^{(2)}$ is modular under the reflections through their zeros; that is, neither $\Psi_8^{(1)}$ nor $\Psi_8^{(2)}$ is reflective in the sense of Definition \ref{def:reflective}.

\subsection{Modular forms on \texorpdfstring{$2U\oplus 2A_2$}{}}
Let $L$ be the signature $(8, 2)$ lattice $2U\oplus 2A_2$. There are two classes of reflective divisors associated to two reflective Borcherds products for $\Orth^+(L)$: 
\begin{itemize}
\item[(i)] $\Psi_{6}$, a cusp form of weight $6$ that vanishes precisely on  hyperplanes $r^\perp$ with $r\in L'$ and $Q(r)=1/3$;
\item[(ii)] $\Phi_{42}$, a cusp form of weight $42$ that vanishes precisely on hyperplanes $r^\perp$ with $r\in L$ and $Q(r)=1$.
\end{itemize}
By Proposition \ref{prop:main},
\begin{enumerate}
\item $\Phi_{42}$ is singular modulo both $p=2$ and $p=5$;
\item $\Psi_6\Phi_{42}$ is singular modulo $p=23$.
\end{enumerate}

\subsection{Modular forms on \texorpdfstring{$2U\oplus D_5$}{}}
Let $L=2U\oplus D_5$. There are two classes of reflective divisors associated to two reflective Borcherds products for $\Orth^+(L)$: 
\begin{itemize}
\item[(i)] $\Psi_{7}$, a cusp form of weight $7$ with a quadratic character that vanishes precisely on hyperplanes $r^\perp$ with $r\in L'$ and $Q(r)=1/2$;
\item[(ii)] $\Phi_{88}$, a cusp form of weight $88$ with a quadratic  character that vanishes precisely on hyperplanes $r^\perp$ with $r\in L$ and $Q(r)=1$.
\end{itemize}
By Proposition \ref{prop:main},
\begin{enumerate}
\item $\Phi_{88}$ is singular modulo $p=19$;
\item $\Psi_{7}\Phi_{88}$ is singular modulo both $p=5$ and $p=37$.
\end{enumerate}

\subsection{Modular forms on \texorpdfstring{$2U\oplus D_6$}{}}
Let $L=2U\oplus D_6$. There are two classes of reflective hyperplanes, each occuring as the divisor of a reflective Borcherds product for $\Orth^+(L)$ with simple zeros: 
\begin{itemize}
\item[(i)] $\Psi_{6}$, a cusp form of weight $6$ with a quadratic character that vanishes precisely on hyperplanes $r^\perp$ with $r\in L'$ and $Q(r)=1/2$;
\item[(ii)] $\Phi_{102}$, a cusp form of weight $102$ with a quadratic  character that vanishes precisely on hyperplanes $r^\perp$ with $r\in L$ and $Q(r)=1$.
\end{itemize}
By Proposition \ref{prop:main},
\begin{enumerate}
\item $\Phi_{102}$ is singular modulo both $p=3$ and $p=11$;
\item $\Psi_{6}\Phi_{102}$ is singular modulo both $p=5$ and $p=7$.
\end{enumerate}

\subsection{Modular forms on \texorpdfstring{$2U\oplus E_6'(3)$}{}}
Let $L=2U\oplus E_6'(3)$, or equivalently $2U(3) \oplus E_6$. There are two classes of reflective hyperplanes and each occurs as the divisor of a reflective Borcherds product for $\Orth^+(L)$ with simple zeros: 
\begin{itemize}
\item[(i)] $\Psi_{12}$, a non-cusp form of weight $12$ that vanishes precisely on hyperplanes $r^\perp$ with $r\in L'$ and $Q(r)=1/3$;
\item[(ii)] $\Phi_{12}$, a non-cusp form of weight $12$ that vanishes precisely on hyperplanes $r^\perp$ with $r\in L$ and $Q(r)=1$.
\end{itemize}
By Proposition \ref{prop:main},
\begin{itemize}
\item $\Psi_{12}\Phi_{12}$ is singular modulo $p=7$.
\end{itemize}

\subsection{Modular forms on \texorpdfstring{$2U\oplus 2A_3$}{}}
Let $L=2U\oplus 2A_3$. There are two classes of reflective hyperplanes under $\Orth^+(L)$, each occurring as the divisor of a reflective Borcherds product for $\Orth^+(L)$: 
\begin{itemize}
\item[(i)] $\Psi_{6}$, a cusp form of weight $6$ that vanishes precisely on hyperplanes $r^\perp$ with $r\in L'$ and $Q(r)=1/2$;
\item[(ii)] $\Phi_{48}$, a cusp form of weight $48$ that vanishes precisely on hyperplanes $r^\perp$ with $r\in L$, $Q(r)=1$ and $r/2\not\in L'$.
\end{itemize}
By applying Proposition \ref{prop:main} we obtain
\begin{enumerate}
\item $\Phi_{48}$ is singular modulo both $p=3$ and $p=5$;
\item $\Psi_6\Phi_{48}$ is singular modulo $p=17$.
\end{enumerate}

\subsection{Modular forms on \texorpdfstring{$2U\oplus D_7$}{}}
Let $L=2U\oplus D_7$. There are two classes of reflective hyperplanes, each occuring as the divisor of a reflective Borcherds product for $\Orth^+(L)$: 
\begin{itemize}
\item[(i)] $\Psi_{5}$, a cusp form of weight $5$ with a quadratic character that vanishes precisely on hyperplanes $r^\perp$ with $r\in L'$ and $Q(r)=1/2$;
\item[(ii)] $\Phi_{114}$, a cusp form of weight $114$ with a quadratic  character that vanishes precisely on hyperplanes $r^\perp$ with $r\in L$ and $Q(r)=1$.
\end{itemize}
By Proposition \ref{prop:main},
\begin{enumerate}
\item $\Phi_{114}$ is singular modulo both $p=13$ and $p=17$;
\item $\Psi_{5}\Phi_{114}$ is singular modulo both $p=7$ and $p=11$.
\end{enumerate}

\subsection{Modular forms on \texorpdfstring{$2U\oplus E_8(2)$}{}}
Let $L=2U\oplus E_8(2)$. There are two classes of reflective hyperplanes and each occurs as the divisor of a reflective Borcherds product for $\Orth^+(L)$: 
\begin{itemize}
\item[(i)] $\Psi_{60}$, a cusp form of weight $60$ that vanishes precisely on hyperplanes $r^\perp$ with $r\in L'$ and $Q(r)=1/2$;
\item[(ii)] $\Phi_{12}$, a non-cusp form of weight $12$ that vanishes precisely on hyperplanes $r^\perp$ with $r\in L$ and $Q(r)=1$.
\end{itemize}
By Proposition \ref{prop:main},
\begin{enumerate}
\item $\Psi_{60}$ is singular modulo $p=7$;
\item $\Psi_{60}\Phi_{12}$ is singular modulo $p=17$.
\end{enumerate}

\subsection{Modular forms on \texorpdfstring{$2U\oplus D_8'(2)$}{}}
Let $L=2U\oplus D_8'(2)$, or equivalently $2U(2) \oplus D_8$. There are  two  reflective Borcherds product for $\Orth^+(L)$ with simple zeros: 
\begin{itemize}
\item[(i)] $\Psi_{28}$, a cusp form of weight $28$ that vanishes precisely on hyperplanes $r^\perp$ with $r\in L'$ and $Q(r)=1/2$;
\item[(ii)] $\Phi_{28}$, a cusp form of weight $28$ that vanishes precisely on hyperplanes $r^\perp$ with $r\in L$ and $Q(r)=1$.
\end{itemize}
By Proposition \ref{prop:main},
\begin{itemize}
\item $\Psi_{28}\Phi_{28}$ is singular modulo $p=13$.
\end{itemize}

\section{Further examples}

Corollary \ref{cor} can be applied to prove that a reflective modular form with simple zeros is singular modulo various primes even when that form is unique.
This is the case for the root lattices $L = 2U \oplus E_d$ for $d=6,7,8$, where there is a unique reflective form $\Phi_k$ of weight $k=120, 165, 252$, respectively. These forms are Borcherds products that vanish precisely on hyperplanes $r^{\perp}$ with $r \in L$ and $Q(r) = 1$.

Let $h$ be the Coxeter number of $E_d$: so $h=12$ for $E_6$, $h=18$ for $E_7$, and $h=30$ for $E_8$. Let $\rho$ be the Weyl vector of $E_d$. The Fourier expansion of  $\Phi_k$ about the $1$-dimensional cusp determined by $2U$ takes the form
$$
\Phi_k(Z)=\sum_{m=h}^\infty \sum_{n=h}^\infty \sum_{v\in E_d'} f(n,v,m) q^n \zeta^v \xi^m, 
$$
where $Z=(\tau, \mathfrak{z}, \omega) \in \HH \times (E_d\otimes\CC) \times \HH$, $q=e^{2\pi i\tau}$, $\zeta^v = e^{2\pi i\latt{\mathfrak{z},v}}$ and $\xi=e^{2\pi i\omega}$. Since $\Phi_k$ is skew-invariant under the involution $(\tau, \mathfrak{z}, \omega)\mapsto (\omega, \mathfrak{z}, \tau)$, its Fourier coefficients satisfy $$f(n,v,m)=-f(m,v,n),$$ and in particular $f(n,v,n)=0$ for all $n\in \ZZ$ and $v\in E_d'$. Therefore, $q^{h+1}\zeta^\rho\xi^h$ is the leading term with coefficient $1$ in the Fourier expansion of $\Phi_k$. This means that the ``Weyl vector" in the sense of Borcherds products of $\Phi_k$ is $(h+1,\rho,h)$.

By \cite{WW20a}, in all three cases there exists $\ell \in \mathbb{N}$, not equal to the singular weight, such that:
\begin{itemize}
\item[(i)] There exists a modular form $G_\ell$ with trivial character for $\Orth^+(L)$ whose Fourier-Jacobi expansion begins $G_\ell=1+O(\xi)$;
\item[(ii)] The space $M_{\ell+2}(\Orth^+(L))$ is one-dimensional, generated by a modular form $G_{\ell+2}$ whose Fourier-Jacobi expansion begins $G_{\ell+2}=1+O(\xi)$.
\end{itemize}
For $E_6$ and $E_7$ we can take $\ell = 4$ and for $E_8$ we can take $\ell=8$. Moreover, both $G_\ell$ and $G_{\ell+2}$ can be chosen to be Eisenstein series, all of whose Fourier coefficients are integers. 

The form $G_\ell$ certainly does not vanish everywhere on the zero locus of $\Phi_k$, as $G_\ell / \Phi_k$ would otherwise be a holomorphic modular form of weight $\ell-k<0$, which is impossible. By the discussion in Section \ref{sec:construction}, 
$$
[\Phi_k, G_\ell] / \Phi_k \in M_{\ell+2}(\Orth^+(L)). 
$$ 
By construction, there is a constant $c$ such that 
\begin{equation}\label{eq:identity}
[\Phi_k, G_\ell] = c \Phi_k G_{\ell+2}.     
\end{equation}
Comparing Fourier coefficients shows that $c$ is the coefficient of the leading term $q^{h+1}\zeta^\rho\xi^h$ in the Fourier expansion of $[\Phi_k, G_\ell]$. By the definition of the bracket $[-,-]$, we calculate
\begin{align*}
c=&(d/2-k)(d/2-\ell)[h(h+1)-Q(\rho)] - (d/2-k-\ell)(d/2-\ell)[h(h+1)-Q(\rho)] \\
=&\ell(d/2-\ell)[h(h+1)-Q(\rho)]. 
\end{align*}

Equation \eqref{eq:identity} implies the following congruences:
\begin{theorem}
\noindent
\begin{enumerate}
\item $\Phi_{120}$ is singular modulo $p=13$;
\item $\Phi_{165}$ is singular modulo both $p=17$ and $p=19$;
\item $\Phi_{252}$ is singular modulo $p=31$.
\end{enumerate}    
\end{theorem}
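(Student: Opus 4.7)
The plan is to invoke Corollary~\ref{cor} with $F = \Phi_k$ and $G = G_\ell$, combined with the identity $[\Phi_k, G_\ell] = c\, \Phi_k G_{\ell+2}$ from equation~\eqref{eq:identity}. In each of the three cases ($E_6, E_7, E_8$) the hypotheses of Corollary~\ref{cor} are straightforward to verify: the Eisenstein series $G_\ell$ has leading Fourier coefficient $1$, hence is nonzero modulo every prime; the primes listed divide the numerator of $n/2-1-k$ (equal to $-117$, $-323/2$, and $-248$ respectively) but divide neither $\ell$ nor the numerator of $n/2-1-\ell$ (equal to $-1$, $-1/2$, and $-4$). The problem is thereby reduced to showing that the rational constant
\[
c = \ell\Bigl(\frac{d}{2} - \ell\Bigr)\bigl[h(h+1) - Q(\rho)\bigr]
\]
has numerator divisible by $p$ in each of the three cases.

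For this one needs only $Q(\rho)$ for $E_6$, $E_7$, and $E_8$. I would invoke the Freudenthal--de Vries strange formula $|\rho|^2 = h\dim(\mathfrak{g})/12$ (with the standard normalization $Q(\alpha) = 1$ for a root $\alpha$), giving $Q(\rho) = 39$ for $E_6$, $Q(\rho) = 399/4$ for $E_7$, and $Q(\rho) = 310$ for $E_8$. A short arithmetic calculation then yields $c = -4 \cdot 117 = -36 \cdot 13$ for $E_6$; $c = -(3 \cdot 17 \cdot 19)/2$ for $E_7$; and $c = -32 \cdot 620 = -640 \cdot 31$ for $E_8$. In each case the relevant primes divide the numerator of $c$, so $[\Phi_k, G_\ell] \equiv 0 \pmod p$ and the three singularity claims follow directly from Corollary~\ref{cor}.

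The only mild subtlety is the half-integrality of $Q(\rho)$ for $E_7$, which produces a factor of $2$ in the denominator of $c$; since $2$ is coprime to both $17$ and $19$, this causes no harm and the congruence $c \equiv 0 \pmod p$ is unambiguous. Otherwise the proof is essentially a plug-in computation, since all the analytic content (the identity \eqref{eq:identity}, the evaluation of the leading Fourier coefficient, and the dimension statements for $M_\ell(\Orth^+(L))$ and $M_{\ell+2}(\Orth^+(L))$) has already been assembled in the discussion preceding the statement.
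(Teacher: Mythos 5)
Your proposal is correct and follows essentially the same route as the paper: both deduce the congruences from the identity $[\Phi_k,G_\ell]=c\,\Phi_k G_{\ell+2}$ by checking that $p$ divides the numerator of $c$ while dividing neither $\ell$ nor the numerator of $\tfrac{n}{2}-1-\ell$, and your values of $Q(\rho)$ from the strange formula agree with the paper's formula $Q(\rho)=\tfrac{1}{24}h(h+1)d$ (since $\dim\mathfrak{g}=d(h+1)$ here). The resulting constants $c=-36\cdot 13$, $-(3\cdot 17\cdot 19)/2$, and $-640\cdot 31$ match, so the argument goes through exactly as in the paper.
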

\begin{proof}
Recall that $Q(\rho)=\frac{1}{24}h(h+1)d$ and therefore 
$$
h(h+1)-Q(\rho) = \frac{1}{24}h(h+1)(24-d).
$$
Let $p$ be a prime dividing both $d/2-k$ and $h(h+1)-Q(\rho)$ but neither $d/2-\ell$ nor $\ell$. Then \eqref{eq:identity} implies that $\Phi_k$ is singular modulo $p$. We will work out the case of $E_6$ in detail; the other two cases are similar.

In the $E_6$ case, the Rankin--Cohen bracket of $\Phi_{120}$ with the Eisenstein series $G_4$ is
\begin{align*}
[\Phi_{120}, G_4] &=117\times \mathbf{\Delta}(\Phi_{120}G_4) - 121\times 117 \times \mathbf{\Delta}(G_4)\Phi_{120} - 121\times \mathbf{\Delta}(\Phi_{120})G_4\\
&=-36\times 13 \times \Phi_{120}G_6. 
\end{align*}
In particular, $\mathbf{\Delta}(\Phi_{120})G_4 \equiv 0 \; (\mathrm{mod}\; 13)$ and therefore $\mathbf{\Delta}(\Phi_{120}) \equiv 0 \; (\mathrm{mod}\; 13)$, so $\Phi_{120}$ is singular modulo $p=13$. 
\end{proof}

\begin{remark}
Let $\mathfrak{M}_7$ be the unique normalized modular form of weight $7$ for $\Orth^+(2U\oplus E_6)$ which was defined in \cite[Theorem 5.4]{WW20a}. Since there are no nonzero modular form of weight $9$ for $\Orth^+(2U\oplus E_6)$, we conclude 
$$
[\Phi_{120}, \mathfrak{M}_7] = 0. 
$$
This implies that $\Phi_{120}\mathfrak{M}_7$ is singular modulo $p=31$ and also that $\Phi_{120}$ is singular modulo $p=13$. Note that neither $\mathfrak{M}_7$ nor $\Phi_{120}\mathfrak{M}_7$ is a Borcherds product.
\end{remark}

\vspace{2mm}

We conclude with an example of a mod $p$ singular Borcherds product that is not reflective and also has non-simple zeros.

Let $L=2U\oplus D_{11}$ and consider the following two Borcherds products for $\Orth^+(L)$:
\begin{itemize}
\item[(i)] $\Psi_1$, a meromorphic modular form of weight $1$ which vanishes precisely with multiplicity $1$ on hyperplanes $r^\perp$ with $r\in L'$ and $Q(r)=1/2$ and whose only singularities are simple poles along hyperplanes $s^\perp$ with $s\in L'$ and $Q(s)=3/8$; 
\item[(ii)] $\Phi_{142}$, a cusp form of weight $142$ which vanishes precisely with multiplicity $1$ on hyperplanes $\lambda^\perp$ with $\lambda\in L$ and $Q(\lambda)=1$, and with multiplicity $26$ on hyperplanes $s^\perp$ with $s\in L'$ with $Q(s)=3/8$. 
\end{itemize}
The form $\Phi_{142}$ is the Jacobi determinant of the generators of a free algebra of meromorphic modular forms constructed in \cite{WW21a}.
The divisors $r^\perp$ and $\lambda^\perp$ are reflective, i.e. the associated reflections lie in $\Orth^+(L)$. However, the divisors  $s^\perp$ are not reflective. By analyzing its Taylor series along the divisor $s^\perp$, we find that $$\frac{[\Phi_{142}, \Psi_1]}{\Phi_{142}\Psi_1}$$ is a meromorphic modular form of weight $2$ with trivial character for $\Orth^+(L)$ whose only singularities are poles of multiplicity two along the hyperplanes $s^\perp$. By the structure theorem of \cite[Theorem 1.2]{WW21a}, there is a constant $c$ such that
$$
[\Phi_{142}, \Psi_1] = c \cdot \Phi_{142} \Psi_1^3;
$$
that is,
$$
-\frac{273\times 9}{4}\times \mathbf{\Delta}(\Phi_{142}\Psi_1) + \frac{275\times 9}{4}\times \mathbf{\Delta}(\Phi_{142})\Psi_1 - \frac{275\times 273}{4}\times \mathbf{\Delta}(\Psi_1)\Phi_{142} = c \Phi_{142} \Psi_1^3. 
$$
By comparing the residues, or leading terms in the Laurent series of both sides along $s^\perp$, we find that $c=1950$. Therefore:

\begin{theorem}
\noindent
\begin{enumerate}
\item $\Phi_{142}$ is singular modulo $p=13$;
\item $\Psi_{1}\Phi_{142}$ is singular modulo $p=5$.
\end{enumerate}    
\end{theorem}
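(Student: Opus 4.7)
The plan is to reduce the identity
$$-\frac{273 \cdot 9}{4} \mathbf{\Delta}(\Phi_{142}\Psi_1) + \frac{275 \cdot 9}{4} \mathbf{\Delta}(\Phi_{142})\Psi_1 - \frac{275 \cdot 273}{4} \mathbf{\Delta}(\Psi_1)\Phi_{142} = 1950 \cdot \Phi_{142} \Psi_1^3$$
established just before the theorem modulo the two primes $p=13$ and $p=5$, and to track which terms survive. The relevant factorizations are $273 = 3 \cdot 7 \cdot 13$, $275 = 5^2 \cdot 11$, and $1950 = 2 \cdot 3 \cdot 5^2 \cdot 13$.

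For part (1), I would reduce modulo $13$. Both $273$ and $1950$ vanish mod $13$, while $275$, $9$, and the denominator $4$ are units mod $13$, so the identity collapses to
$$\mathbf{\Delta}(\Phi_{142}) \cdot \Psi_1 \equiv 0 \pmod{13}.$$
Since $\Psi_1$ is a Borcherds product, its Fourier expansion at the cusp attached to $2U$ starts with the Weyl-vector monomial whose coefficient is $\pm 1$, and hence $\Psi_1$ is a unit in the ring of (Laurent) Fourier expansions modulo $13$. Dividing by $\Psi_1$ then yields $\mathbf{\Delta}(\Phi_{142}) \equiv 0 \pmod{13}$, which is exactly the singularity of $\Phi_{142}$ modulo $13$.

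For part (2), I would reduce modulo $5$. Both $275$ and $1950$ vanish mod $5$, while $273$, $9$, and $4$ are units mod $5$, so the identity collapses to
$$\mathbf{\Delta}(\Phi_{142}\Psi_1) \equiv 0 \pmod{5}.$$
To conclude that $\Phi_{142}\Psi_1$ is singular modulo $5$, I still need it to be a holomorphic modular form with integer Fourier coefficients. This follows from the divisor analysis preceding the theorem: the only poles of $\Psi_1$ are the simple poles along the hyperplanes $s^\perp$ with $Q(s)=3/8$, and $\Phi_{142}$ vanishes to order $26$ along exactly these hyperplanes, so $\Phi_{142}\Psi_1$ is a holomorphic Borcherds product of weight $143$ with integer Fourier expansion.

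The only delicate step in this plan is the inversion of $\Psi_1$ in part (1), where one exploits that the Weyl-vector leading monomial of the Borcherds product $\Psi_1$ has coefficient $\pm 1$ and is therefore invertible in any residue characteristic. The substantial work, namely the derivation of the identity itself and in particular the evaluation $c = 1950$ via comparison of leading Laurent coefficients along the non-reflective divisor $s^\perp$ (using the structure theorem \cite[Theorem 1.2]{WW21a} to pin down $[\Phi_{142},\Psi_1]/(\Phi_{142}\Psi_1)$ up to a scalar multiple of $\Psi_1^2$), is already carried out in the paragraph preceding the theorem.
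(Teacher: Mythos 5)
Your reduction of the identity $[\Phi_{142},\Psi_1]=1950\,\Phi_{142}\Psi_1^3$ modulo $13$ and modulo $5$, isolating $\mathbf{\Delta}(\Phi_{142})\Psi_1$ and $\mathbf{\Delta}(\Phi_{142}\Psi_1)$ respectively and then cancelling $\Psi_1$ via its unit leading Fourier coefficient, is exactly the argument the paper intends when it writes ``Therefore'' after establishing that identity. The proposal is correct and takes essentially the same approach as the paper.
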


\bibliographystyle{plainnat}
\bibliofont
\bibliography{refs}

\end{document}